\newcommand\Defn[1]{\textbf{\color{black}#1}}
\newcommand\R{\mathbb{R}}               
\newcommand\Z{\mathbb{Z}}
\newtheorem{thm}{Theorem}
\newtheorem{cor}[thm]{Corollary}
\newtheorem{lem}[thm]{Lemma}
\newtheorem{prop}[thm]{Proposition}
\theoremstyle{definition}
\newtheorem*{obs*}{Observation}
\title{Minkowski complexes and convex threshold dimension}
\author{Florian Frick}
\address{Department of Mathematics, %
Cornell University, Ithaca, NY 14853, %
USA}
\email{ff238@cornell.edu}
\author{Raman Sanyal}
\address{Institut f\"ur Mathematik, Goethe-Universit\"at Frankfurt, Germany}
\email{sanyal@math.uni-frankfurt.de}
\keywords{Minkowski sum, Minkowski complexes, threshold complexes, convex
threshold dimension, discrete mixed volume}
\subjclass[2010]{
05E45, 
52A35, 
52A39} 
\date{\today}
\begin{document}

\begin{abstract}
    For a collection of convex bodies $P_1,\dots,P_n \subset \R^d$ containing
    the origin, a \emph{Minkowski complex} is given by those subsets whose
    Minkowski sum does not contain a fixed basepoint.  Every simplicial
    complex can be realized as a Minkowski complex and for convex bodies on
    the real line, this recovers the class of threshold complexes.  The
    purpose of this note is the study of the \emph{convex threshold dimension}
    of a complex, that is, the smallest dimension in which it can be realized
    as a Minkowski complex.  In particular, we show that the convex threshold
    dimension can be arbitrarily large. This is related to work of Chv\'atal
    and Hammer (1977) regarding forbidden subgraphs of threshold graphs. We
    also show that convexity is crucial this context.
\end{abstract}

\maketitle

\renewcommand\l{\lambda}%

A simplicial complex $\Delta$ on vertices $[n] := \{1,\dots,n\}$ is a
\Defn{threshold complex} if there are real numbers $\l_1,\dots,\l_n, \mu \in
\R$ with $0 \le \l_i \le \mu$ for all $i = 1,\dots,n$ such that for any
$\sigma \subseteq [n]$
\[
    \sigma \ \in \ \Delta \qquad \text{if and only if} \qquad \sum_{i \in
    \sigma} \l_i \ < \ \mu.
\]
Threshold complexes (or hypergraphs) where proposed by Golumbic~\cite{Gol80}
as a higher-dimensional generalization of the threshold \emph{graphs} of
Chv\'atal and Hammer~\cite{CH77}; see also~\cite{RRST}. If we assume that $0
\le \l_1 \le \cdots \le \l_n \le \mu$, then for any $i \in \sigma \in \Delta$
and $j < i$, we have $(\sigma \setminus i) \cup j \in \Delta$. Hence,
threshold complexes are \Defn{shifted} in the sense of Kalai~\cite{kalai} 
and topologically wedges of (not necessarily equidimensional)
spheres. See~\cite{Klivans} and~\cite{edelman} for more information regarding
the combinatorics and topology of threshold and shifted complexes.

\renewcommand\P{\mathcal{P}}%
The purpose of this note is to investigate a generalization of threshold
complexes inspired by convex geometry. For that, let $\P = (P_1, \dots, P_n)$
be an ordered family of convex bodies in~$\R^d$ each containing the origin and
let $\mu \in \R^d$ be a point.  The \Defn{Minkowski complex} associated to
$\P$ and $\mu$ is the simplicial complex $\Delta(\P;\mu)$ given by the
simplices $\sigma \subseteq [n]$ with
\[
    \sigma \ \in \ \Delta(\P; \mu) \qquad \text{if and only if} \qquad \mu
    \ \notin \ P_\sigma \ := \ \sum_{i \in \sigma} P_i.
\]
Here, $\sum_{i \in \sigma}P_i \ = \ \{ \sum_{i \in \sigma} p_i : p_i \in P_i
\}$ is the  \Defn{Minkowski sum} (or vector sum) and we set $P_\emptyset :=
\{0\}$. By setting $P_i := \{ t \in \R : 0 \le t \le \l_i\}$, it follows that
threshold complexes are Minkowski complexes. For the case that each $P_i
\subset \R^d$ is an axis-parallel box, these simplicial complexes have
been studied by Pakianathan and Winfree~\cite{pakianathan2013} under the name
of \emph{quota complexes}. We may also replace a convex body $P_i$ by a
suitable convex polytope and we will tacitly do this henceforth.

\newcommand\CME{\mathrm{CM}E}%

Our motivation for studying Minkowski complexes comes from \emph{mixed Ehrhart
theory}. For a set $S \subset \R^d$, let us define the \Defn{discrete volume}
$E(S) := |S \cap \Z^d|$. The \Defn{discrete mixed volume} of lattice polytopes
$P_1,\dots,P_n \subset \R^d$ is defined as
\[
    \CME(P_1,\dots,P_n) \ = \ \sum_{I \subseteq [n]} (-1)^{n-|I|} E(P_I).
\]
It was shown in~\cite{JS} (see also~\cite{Bihan}) that, like its continuous
counterpart the \emph{mixed volume}, the discrete mixed volume satisfies 
\[
    0 \ \le \ \CME(Q_1,\dots,Q_n) \ \le \ \CME(P_1,\dots,P_n)
\] 
for lattice polytopes $Q_i \subseteq P_i$ for $i=1,\dots,n$. Since $\CME$ is
invariant under lattice translations, we may assume that $0 \in P_i$ for all
$i$. This allows us to express the discrete mixed volume as follows:
\begin{thm}\label{thm:CME}
    Let $\P = (P_1,\dots,P_n)$ be a family of $n > 0$ lattice polytopes in
    $\R^d$ with $0 \in P_i$ for all $i$. Then 
    \[
        (-1)^n \CME(P_1,\dots,P_n) \ = \ \sum_{\mu \in P_{[n]} \cap \Z^d}
        \tilde\chi(\Delta(\P;\mu)),
    \]
    where $\tilde\chi$ denotes the reduced Euler characteristic.
\end{thm}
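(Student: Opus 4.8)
The plan is to unwind the definition of $\CME$, interpret the discrete volumes lattice-point by lattice-point, and recognize the resulting inner alternating sum as a reduced Euler characteristic. First I would observe that since $(-1)^n(-1)^{n-|I|} = (-1)^{|I|}$, the left-hand side is simply
\[
    (-1)^n \CME(P_1,\dots,P_n) \ = \ \sum_{I \subseteq [n]} (-1)^{|I|}\, E(P_I).
\]
Writing $E(P_I) = \sum_{\mu \in \Z^d} [\mu \in P_I]$ with an indicator (Iverson) bracket, and using that $0 \in P_j$ for every $j$ forces $P_I \subseteq P_{[n]}$ for all $I$, only lattice points $\mu \in P_{[n]} \cap \Z^d$ can contribute. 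Interchanging the two finite sums then reduces everything to proving, for each such $\mu$, the identity
\[
    S(\mu) \ := \ \sum_{I \subseteq [n]} (-1)^{|I|}\, [\mu \in P_I] \ = \ \tilde\chi(\Delta(\P;\mu)).
\]

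For this second step I would first verify that $\Delta(\P;\mu)$ really is a simplicial complex: if $\tau \subseteq \sigma$ then $P_\tau \subseteq P_\sigma$, again because each summand contains $0$, so $\mu \notin P_\sigma$ implies $\mu \notin P_\tau$, i.e.\ faces are closed under passing to subsets. By definition the subsets $I$ with $\mu \in P_I$ are precisely the non-faces of $\Delta(\P;\mu)$, so $S(\mu) = \sum_{I \notin \Delta(\P;\mu)} (-1)^{|I|}$. Since $n > 0$, the full alternating sum $\sum_{I \subseteq [n]} (-1)^{|I|} = (1-1)^n$ vanishes, and therefore
\[
    S(\mu) \ = \ -\sum_{\sigma \in \Delta(\P;\mu)} (-1)^{|\sigma|} \ = \ \sum_{\sigma \in \Delta(\P;\mu)} (-1)^{|\sigma|-1}.
\]
The right-hand side is exactly the reduced Euler characteristic $\tilde\chi$, where the face-counting sum ranges over all faces including the empty face (which is a face precisely when $\mu \neq 0$, since $P_\emptyset = \{0\}$).

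The one genuinely delicate point, and the step I expect to require the most care, is the boundary case $\mu = 0$. Here $0 \in P_I$ for every $I$, including $I = \emptyset$, so $\Delta(\P;0)$ contains no faces at all; it is the void complex, and $S(0) = \sum_{I \subseteq [n]} (-1)^{|I|} = 0$. I would therefore adopt, and explicitly flag, the convention that the reduced Euler characteristic of the void complex equals $0$, which is precisely what the augmented chain complex yields and what is needed for the formula to hold uniformly. With this convention the per-$\mu$ identity $S(\mu) = \tilde\chi(\Delta(\P;\mu))$ holds in every case, and summing over $\mu \in P_{[n]} \cap \Z^d$ gives the theorem.
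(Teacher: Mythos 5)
Your proof is correct and follows essentially the same route as the paper: expand $\CME$ lattice-point by lattice-point, interchange the two finite sums, and identify the inner alternating sum $\sum_{I}(-1)^{|I|}[\mu \in P_I]$ with $\tilde\chi(\Delta(\P;\mu))$ using $\sum_{I \subseteq [n]}(-1)^{|I|} = 0$ for $n>0$. The only difference is one of care rather than substance: you make explicit the sign bookkeeping, the verification that $\Delta(\P;\mu)$ is a complex, and the void-complex case $\mu = 0$, all of which the paper's terse proof leaves implicit.
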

\begin{proof}
    Since all polytopes $P_i$ contain the origin, it follows that $P_I
    \subseteq P_J$ for $I \subseteq J$.  Let us write $[P_I] : \R^d
    \rightarrow \{0,1\}$ for the characteristic function of $P_I$ and define
    $F := \sum_{I} (-1)^{n-|I|}[P_I]$. Then $\CME(P_1,\dots,P_n) = \sum_{\mu
    \in \Z^d} F(\mu)$. The result now follows from the observation that 
    \[
        (-1)^n F(\mu) \ = \ \sum_{\sigma \subseteq [n],\, \mu \notin P_\sigma}
        (-1)^{|\sigma|} \ = \ \tilde\chi(\Delta(\P,\mu))
    \]
    for all $\mu \in \R^d$.
\end{proof}

Theorem~\ref{thm:CME} prompted the question if Minkowski complexes have
restricted topology.  It was already noted in~\cite[Thm.~D.1]{pakianathan2013}
that this is not the case. We recall their result with a short proof.
\begin{prop}\label{prop:all_minkowski}
    For any simplicial complex $\Delta \subseteq 2^{[n]}$ there are polytopes
    $\P = (P_1,\dots,P_n)$ and $\mu$ in some $\R^D$ such that $\Delta =
    \Delta(\P;\mu)$.
\end{prop}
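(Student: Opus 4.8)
The plan is to realize $\Delta$ using axis-parallel boxes, exploiting the fact that the Minkowski sum of boxes is computed coordinatewise, so that the membership condition $\mu \in P_\sigma$ decouples into a conjunction of one-dimensional conditions. First I would record the combinatorial reformulation of the goal. Since $0 \in P_i$ forces $P_\sigma \subseteq P_\tau$ whenever $\sigma \subseteq \tau$, the family $\{\sigma : \mu \in P_\sigma\}$ is automatically upward closed, matching the fact that the non-faces of $\Delta$ form an upward closed family. Concretely, writing $F_1,\dots,F_r$ for the facets (maximal faces) of $\Delta$, a set $\sigma$ is a face exactly when $\sigma \subseteq F_s$ for some $s$; hence $\sigma \notin \Delta$ exactly when $\sigma \not\subseteq F_s$ for \emph{every} $s$.

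This reformulation dictates the construction: I would take one coordinate per facet, so $D := r$, and for each vertex $i \in [n]$ set
\[
    P_i \ := \ \prod_{s=1}^{r} I_{is}, \qquad
    I_{is} \ := \ \begin{cases} [0,1] & \text{if } i \notin F_s,\\ \{0\} & \text{if } i \in F_s, \end{cases}
\]
together with the basepoint $\mu := (1,\dots,1)$. Each $P_i$ is a $0/1$-box containing the origin, and can be made full-dimensional by replacing $\{0\}$ with $[0,\varepsilon]$ for any $\varepsilon < \tfrac1n$ should a genuine convex body be desired. Since a Minkowski sum of axis-parallel boxes is the box of coordinatewise sums, I obtain $P_\sigma = \prod_{s=1}^{r} [0,\, |\sigma \setminus F_s|]$.

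The verification is then immediate and purely coordinatewise: $\mu \in P_\sigma$ holds iff $|\sigma \setminus F_s| \ge 1$ for every $s$, i.e.\ iff $\sigma \not\subseteq F_s$ for all facets $F_s$, which by the reformulation above says precisely that $\sigma$ is a non-face. Equivalently, $\mu \notin P_\sigma$ iff $\sigma \in \Delta$, which is the defining property $\Delta = \Delta(\P;\mu)$. A handful of degenerate cases ($\Delta$ the full simplex, or the complex whose only face is $\emptyset$) should be checked against this formula, but each reduces to $r=1$ and causes no difficulty.

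The one genuinely delicate point---and the step I would flag as the main obstacle---is the choice of what to index the coordinates by. Encoding a downward closed family through a single convex membership test requires turning the logical OR hidden in ``contained in some facet'' into something a box can express. The resolution is that non-membership in a box is itself an OR over coordinates ($\mu \notin P_\sigma$ iff $\mu_s$ is unreachable in \emph{some} coordinate $s$), so indexing coordinates by facets makes the two sides align exactly; indexing instead by the minimal non-faces would produce the opposite AND/OR pairing and fail. Once this alignment is identified, everything else is routine.
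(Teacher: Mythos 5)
Your proof is correct and takes essentially the same approach as the paper's: one coordinate per facet, axis-parallel boxes, and a purely coordinatewise verification that $\mu \in P_\sigma$ exactly when $\sigma$ lies in no facet. The only difference is cosmetic---the paper uses side lengths $1$ (for $i \in \sigma_j$) and $|\sigma_j|+1$ (for $i \notin \sigma_j$) with $\mu_j = |\sigma_j|+1$, so its boxes are full-dimensional from the start, whereas your $0/1$ variant needs the small $\varepsilon$-thickening you already supply.
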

\begin{proof}
    Let $\sigma_1, \dots, \sigma_D$ be the facets of~$\Delta$. For $1 \le i
    \le n$ and $1 \le j \le D$ we set $t_{ij} := 1$ if $i \in \sigma_j$ and
    $t_{ij} := |\sigma_j|+1$ otherwise. For $1 \le i \le n$ define
    \[
        P_i \ := \ \{ p \in \R^D : 0 \le p_j \le t_{ij} \text{ for } 1 \le j
        \le D \}
    \]
    and let $\mu = (|\sigma_1|+1, \dots, |\sigma_d|+1)$.  Observe that $\mu
    \notin \sum_{i \in \sigma_j} P_i$ for all~$j$ and thus $\Delta \subseteq
    \Delta(\P; \mu)$. Conversely, if $\tau \not \in \Delta$, then for any $j$
    there is an $i$ with $i \in \tau \setminus \sigma_j$. This implies $\mu
    \in P_\tau$.
\end{proof}

\newcommand\ctd{\mathsf{ctd}}%

As a measure of complexity, define the \Defn{convex threshold dimension}
$\ctd(\Delta)$ of a simplicial complex $\Delta \subseteq 2^{[n]}$ as smallest
dimension $d$ in which $\Delta$ can be realized as a Minkowski complex. This
is exactly the minimum over $\dim P_1+\cdots+P_n$ over all $(\P,\mu)$ for
which $\Delta = \Delta(\P;\mu)$. Thus, the empty complex is the unique complex
of convex threshold dimension $0$ while $\ctd(\Delta) = 1$ if and only if
$\Delta$ is a threshold complex. Proposition~\ref{prop:all_minkowski} shows
that the convex threshold dimension is finite for every simplicial complex. In
the remainder we will show that $\ctd(\Delta)$ can be arbitrarily large and
hence is a \emph{proper} measure of the complexity of $\Delta$.

In~\cite{CH77} it was shown that threshold graphs are characterized by the
three forbidden induced subgraphs given in Figure~\ref{fig:forbidden}.
\begin{figure}[htpb]
    \includegraphics[width=.6\linewidth]{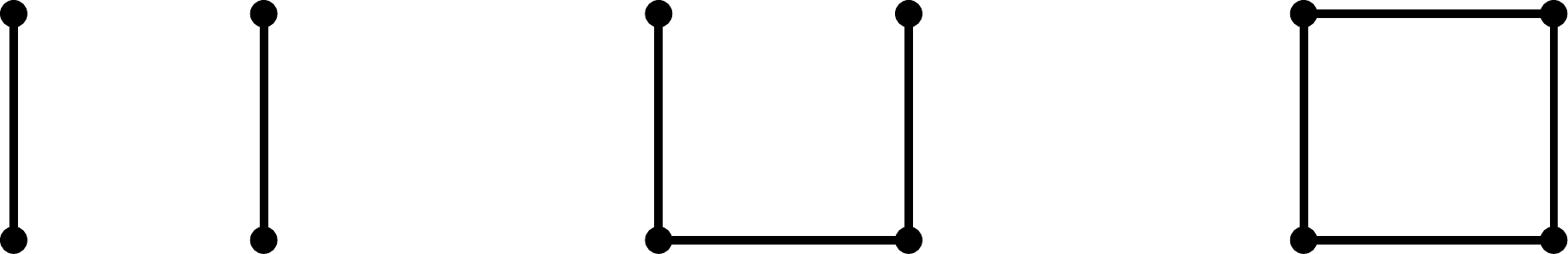}
    \caption{The three forbidden induced subgraphs for threshold graphs.}
    \label{fig:forbidden}
\end{figure}
The following result shows that these graphs are key to increasing the convex
threshold dimension. Let us denote by $\Delta * \Gamma = \{ \sigma \uplus \tau
: \sigma \in \Delta, \tau \in \Gamma \}$ the join of simplicial
complexes $\Delta$ and $\Gamma$.

\begin{thm}\label{thm:inc}
    Let $\Gamma$ be a simplicial complex that contains one of the graphs of
    Figure~\ref{fig:forbidden} as an induced subgraph. Then
    \[
        \ctd(\Delta * \Gamma) \ \ge \ \ctd(\Delta) + 1
    \]
    for every simplicial complex $\Delta$.
\end{thm}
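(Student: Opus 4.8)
The plan is to translate convex threshold dimension into the language of support functions and then show that any realization of $\Delta*\Gamma$ in $\R^d$ can be pushed down to a realization of $\Delta$ in $\R^{d-1}$, the missing dimension being forced by the obstruction inside $\Gamma$. Fix a realization $\Delta*\Gamma = \Delta(\P;\mu)$ in $\R^d$, writing $P_v$ for the bodies indexed by the vertices of $\Delta$, $Q_w$ for those indexed by $\Gamma$, and set $f_v := h_{P_v}$, $f_w := h_{Q_w}$ (support functions, sublinear and nonnegative since $0\in P_v,Q_w$) and $g(\nu) := \langle \mu,\nu\rangle$. By strict separation of $\mu$ from the convex set $P_\sigma$, for any vertex set $\rho$ one has $\mu\notin P_\rho$ if and only if the cone $N_\rho := \{\nu : g(\nu) > \sum_{i\in\rho} f_i(\nu)\}$ is nonempty; note that $N_\rho$ is an \emph{open convex cone}, being the positivity set of the concave function $g-\sum_{i\in\rho}f_i$. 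In particular $\sigma\in\Delta$ iff $N_\sigma\neq\emptyset$ (taking $\rho=\sigma$ and using $\emptyset\in\Gamma$).

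The key reduction I would record is that it suffices to find a single hyperplane $H=u^\perp$ through the origin with $N_\sigma\cap H\neq\emptyset$ for every $\sigma\in\Delta$. Indeed, restricting every $f_v$ and $g$ to $H$ is the same as orthogonally projecting the bodies $P_v$ and the point $\mu$ along $u$, which yields bodies in $\R^{d-1}$ containing the origin; a witness $\nu\in H$ of $g(\nu)>\sum_{v\in\sigma}f_v(\nu)$ is still a witness in $\R^d$, and a non-face of $\Delta$ admits no witness in $\R^d$ at all, so the projected family realizes exactly $\Delta$ in dimension $d-1$, giving $\ctd(\Delta)\le d-1$.

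To build $H$ I would use four vertices $a,b,c,d$ of $\Gamma$ with $\{a,b\},\{c,d\}\in\Gamma$ and $\{a,c\},\{b,d\}\notin\Gamma$; each of the three forbidden graphs contains this pattern (for $C_4$ take two opposite edges and the two diagonals, for $P_4$ the end edges $ab,cd$ and the non-adjacent pairs, and for $2K_2$ the pattern is literal), and the word \emph{induced} is exactly what guarantees that $\{a,c\},\{b,d\}$ are genuine non-edges of $\Gamma$. The non-edges give, for all $\nu$, the inequalities $g(\nu)\le f_a(\nu)+f_c(\nu)$ and $g(\nu)\le f_b(\nu)+f_d(\nu)$, hence
\[
    2\,g(\nu) \ \le\ f_a(\nu)+f_b(\nu)+f_c(\nu)+f_d(\nu) \qquad \text{for all } \nu .
\]
Consequently the two nonempty open convex cones $A:=\{\nu:g(\nu)>f_a(\nu)+f_b(\nu)\}$ and $C:=\{\nu:g(\nu)>f_c(\nu)+f_d(\nu)\}$ (nonempty because $\{a,b\},\{c,d\}\in\Gamma$) are \emph{disjoint}: a common point $\nu$ would force $2g(\nu)>(f_a+f_b+f_c+f_d)(\nu)$, contradicting the display. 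Separating the disjoint convex cones $A$ and $C$ by a hyperplane through the origin produces $u\neq 0$ with $\langle u,\cdot\rangle\le 0$ on $A$ and $\langle u,\cdot\rangle\ge 0$ on $C$.

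Finally I would verify $N_\sigma\cap u^\perp\neq\emptyset$ for each $\sigma\in\Delta$. Since $\sigma\cup\{a,b\}\in\Delta*\Gamma$, there is a witness $\nu$ with $g(\nu)>\sum_{v\in\sigma}f_v(\nu)+f_a(\nu)+f_b(\nu)$; this $\nu$ lies in $N_\sigma$ and in $A$, so $\langle u,\nu\rangle\le 0$, and symmetrically $\sigma\cup\{c,d\}$ yields a point of $N_\sigma$ with $\langle u,\cdot\rangle\ge 0$. As $N_\sigma$ is convex, the segment between these two points stays in $N_\sigma$ and the linear function $\langle u,\cdot\rangle$ vanishes somewhere on it, giving a nonzero $\nu^\ast\in N_\sigma\cap u^\perp$ (nonzero because $0\notin N_\sigma$). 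This is the required hyperplane, completing the descent. The step I expect to be the main obstacle is precisely the globalization: the locus where the two certificates for a face cross is a nonlinear difference-of-support-functions surface, so a direct interpolation fails; the device that makes it work is to separate the two \emph{fixed} cones $A$ and $C$ by an honest hyperplane first, after which the single linear functional $\langle u,\cdot\rangle$ interpolates inside each convex cone $N_\sigma$ uniformly.
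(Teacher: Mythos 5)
Your proof is correct, and it is in essence a dualization of the paper's argument, executed with different tools; the comparison is instructive. The paper stays in the primal space $\R^d$: writing $e=\{a,b\}$, $f=\{c,d\}$ for the two edges and $e'=\{a,c\}$, $f'=\{b,d\}$ for the diagonal non-edges, it uses $\mu \in P_{e'} \cap P_{f'}$ and $e\cup f = e'\cup f'$ to get $2\mu \in P_{e'}+P_{f'} = P_e+P_f$, hence an \emph{explicit} vector $v\neq 0$ with $\mu-v \in P_e$ and $\mu+v\in P_f$; the line $\ell = \mu+\R v$ then meets $P_e$ and $P_f$ on opposite sides of $\mu$, so by the same-side lemma (Lemma~\ref{lem:lines-joins}) it can meet no $P_\sigma$ with $\sigma\in\Delta$, and Lemma~\ref{lem:proj} projects along $\ell$. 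Your ``key reduction'' is exactly the dual formulation of Lemma~\ref{lem:proj} (the line $\mu+\R u$ misses $P_\sigma$ if and only if $N_\sigma\cap u^\perp \neq \emptyset$), and the disjointness of your cones $A$ and $C$ is the dual formulation of $2\mu\in P_e+P_f$; but where the paper produces the projection direction explicitly and then argues by contradiction via Lemma~\ref{lem:lines-joins}, you produce the direction abstractly by separating the two fixed cones, and then argue directly, replacing the same-side lemma by convexity of $N_\sigma$ plus the intermediate value theorem. The two proofs are more tightly linked than they look: the paper's vector $v$ is an explicit solution of your separation problem, since $\mu-v\in P_e$ forces $\langle v,\nu\rangle > 0$ for all $\nu\in A$, and $\mu+v\in P_f$ forces $\langle v,\nu\rangle<0$ for all $\nu\in C$. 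What the paper's route buys is elementarity and explicitness (no support functions, no appeal to a separation theorem); what yours buys is a direct rather than contradictory argument and a clean isolation of where convexity of the bodies enters (sublinearity of the $f_i$, hence convexity of the witness cones), which also illuminates why Proposition~\ref{prop:any} shows the whole mechanism collapses without convexity. Two small points to tighten: justify that the separating hyperplane of $A$ and $C$ may be taken through the origin (standard for cones: homogenize any separating affine hyperplane, as both sets are closed under positive scaling), and note that, exactly like the paper, your argument uses $\emptyset\in\Delta$ so that $\{a,b\}$ and $\{c,d\}$ are faces of $\Delta*\Gamma$.
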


\begin{cor}
    For every $d \ge 0$, there is a simplicial complex $\Delta$ with
    $\ctd(\Delta) \ge d$. 
\end{cor}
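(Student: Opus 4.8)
The plan is to derive the corollary directly from Theorem~\ref{thm:inc} by iterating the join construction. Fix once and for all one of the three graphs of Figure~\ref{fig:forbidden} and call it $\Gamma_0$, regarded as a one-dimensional simplicial complex. Since $\Gamma_0$ trivially contains itself as an induced subgraph, Theorem~\ref{thm:inc} applies with $\Gamma = \Gamma_0$ and \emph{any} simplicial complex occupying the other slot of the join.

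Next I would set $\Delta_d := \Gamma_0^{* d}$, the $d$-fold join of $\Gamma_0$ with itself, adopting the convention that the $0$-fold join $\Delta_0 = \{\emptyset\}$ is the join-identity. The claim is that $\ctd(\Delta_d) \ge d$ for every $d \ge 0$, which is precisely the assertion of the corollary.

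I would prove the claim by induction on $d$. The base case $d = 0$ is immediate, since the convex threshold dimension is always nonnegative. For $d \ge 1$ I use associativity of the join to write $\Delta_d = \Delta_{d-1} * \Gamma_0$ and apply Theorem~\ref{thm:inc} with $\Delta = \Delta_{d-1}$:
\[
    \ctd(\Delta_d) \ \ge \ \ctd(\Delta_{d-1}) + 1 \ \ge \ (d-1) + 1 \ = \ d,
\]
where the second inequality is the induction hypothesis.

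All of the substance lives in Theorem~\ref{thm:inc}, so the corollary itself presents no genuine obstacle; the only points requiring care are formal. I would pin down the join convention so that the base case is unambiguous and invoke associativity of $*$ in order to peel off a single factor $\Gamma_0$ at each inductive step. I note that one need not reuse the same forbidden graph throughout -- any family of complexes each containing a forbidden subgraph would serve equally well -- but iterating a single fixed $\Gamma_0$ keeps the bookkeeping minimal.
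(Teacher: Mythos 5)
Your proof is correct and is exactly the argument the paper intends (the corollary is stated without proof as an immediate consequence of Theorem~\ref{thm:inc}): iterate the join with a fixed forbidden graph $\Gamma_0$, which contains itself as an induced subgraph, and induct using $\ctd(\Delta_{d-1} * \Gamma_0) \ge \ctd(\Delta_{d-1}) + 1$. Your care with the join-identity convention for the base case $d=0$ and with associativity of $*$ is appropriate but routine; nothing is missing.
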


For the proof of Theorem~\ref{thm:inc}, we first note the following two
helpful facts about Minkowski complexes.

\begin{lem}\label{lem:lines-joins}
    Let $\Delta = \Delta(\P;\mu)$ be a Minkowski complex for $P_1,\dots,P_n
    \subset \R^d$. Let $\sigma, \tau \in \Delta$ be faces such that 
    $\sigma \cup \tau \in \Delta$. For any line $\ell$ through $\mu$, then the
    restrictions of $P_\sigma$ and $P_\tau$ to $\ell$ are on the same side
    of $\mu$.
\end{lem}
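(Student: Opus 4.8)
The plan is to argue by contradiction: if the restrictions $P_\sigma \cap \ell$ and $P_\tau \cap \ell$ lie on opposite sides of $\mu$, I will exhibit a point of $P_{\sigma \cup \tau}$ equal to $\mu$, contradicting $\sigma \cup \tau \in \Delta$. First I would record the easy structural observation that since each $P_i$ is convex and contains $0$, every Minkowski sum $P_\sigma$ is convex and contains $0$, so $P_\sigma \cap \ell$ is a (possibly empty) subinterval of $\ell$. Because $\sigma \in \Delta$ means $\mu \notin P_\sigma$, this interval avoids $\mu$ and hence lies entirely on one side of $\mu$; the same holds for $\tau$. Thus ``same side'' is well defined, and the only configuration to rule out is that the two nonempty intervals sit on opposite sides.

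Suppose then that there are $p \in P_\sigma \cap \ell$ and $q \in P_\tau \cap \ell$ strictly on opposite sides of $\mu$. Since $\mu$ then separates $p$ and $q$ on $\ell$, it lies in the open segment between them, so I may write $\mu = \alpha p + \beta q$ with $\alpha, \beta > 0$ and $\alpha + \beta = 1$. Writing $p = \sum_{i \in \sigma} p_i$ and $q = \sum_{j \in \tau} q_j$ with $p_i \in P_i$ and $q_j \in P_j$, the idea is to redistribute the coefficients $\alpha$ and $\beta$ across these summands so as to present $\mu$ as a single element of $P_{\sigma \cup \tau} = \sum_{k \in \sigma \cup \tau} P_k$.

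The key step is this redistribution, and it is exactly where both hypotheses $0 \in P_i$ and convexity of each $P_i$ enter. For an index $k \in \sigma \setminus \tau$, the point $\alpha p_k$ is a convex combination of $p_k \in P_k$ and $0 \in P_k$, hence lies in $P_k$; symmetrically $\beta q_k \in P_k$ for $k \in \tau \setminus \sigma$. The delicate case is $k \in \sigma \cap \tau$, where both a $\sigma$-contribution and a $\tau$-contribution want to use the block $P_k$, yet $P_{\sigma \cup \tau}$ offers only one copy of it; here I would use that $\alpha p_k + \beta q_k$ is a genuine convex combination of two points of $P_k$ and is therefore in $P_k$. Assigning one summand $r_k \in P_k$ to each $k \in \sigma \cup \tau$ in this way yields $\mu = \sum_{k \in \sigma \cup \tau} r_k \in P_{\sigma \cup \tau}$, the desired contradiction. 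I expect the handling of the overlap $\sigma \cap \tau$ to be the only real obstacle; once one notices that combining the two fractional contributions there is again a convex combination of points of a single $P_k$, the argument closes.
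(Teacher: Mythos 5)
Your proof is correct, and it is worth contrasting with the paper's, which is shorter and runs in the opposite direction. The paper argues top-down: since every $P_i$ contains the origin, $P_\sigma \cup P_\tau \subseteq P_{\sigma\cup\tau}$, and since $\sigma\cup\tau\in\Delta$, the convex set $P_{\sigma\cup\tau}$ misses $\mu$; hence $P_{\sigma\cup\tau}\cap\ell$ is an interval avoiding $\mu$, and the two restrictions, being subsets of this interval, lie on the same side of $\mu$. You argue bottom-up by contradiction, manufacturing from points $p\in P_\sigma\cap\ell$ and $q\in P_\tau\cap\ell$ on opposite sides an explicit representation of $\mu=\alpha p+\beta q$ as a sum of points $r_k\in P_k$ over $k\in\sigma\cup\tau$; in effect you verify $\mathrm{conv}(P_\sigma\cup P_\tau)\subseteq P_{\sigma\cup\tau}$ by hand rather than deducing it from the containment plus convexity of the Minkowski sum. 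Both proofs rest on the same two hypotheses ($0\in P_i$ and convexity of each $P_i$), but your redistribution step earns something concrete: it treats the overlap $\sigma\cap\tau$ correctly, whereas the paper's proof opens with the identity $P_{\sigma\cup\tau}=P_\sigma+P_\tau$, which holds only for disjoint $\sigma,\tau$; in general $P_\sigma+P_\tau=P_{\sigma\cup\tau}+P_{\sigma\cap\tau}$, so one only has $P_{\sigma\cup\tau}\subseteq P_\sigma+P_\tau$, and $\mu$ may well lie in $P_\sigma+P_\tau$ (take $\sigma=\tau=\{1\}$, $P_1=[0,1]$, $\mu=3/2$). The paper's argument survives this because it only ever uses the set $P_{\sigma\cup\tau}$ itself, and in the intended application (Theorem~\ref{thm:inc}) the relevant faces are disjoint; but your version is the one that proves the lemma exactly as stated, for arbitrary, possibly overlapping faces, at the cost of being longer and less conceptual.
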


\begin{proof}
    Since $\sigma \cup \tau \in \Delta$ the convex set $P_{\sigma \cup \tau} =
    P_\sigma + P_\tau$ does not contain~$\mu$. By assumption on $\P$, we have
    $P_\sigma \cup P_\tau \subseteq P_{\sigma \cup \tau}$ from which the claim
    follows.
\end{proof}

\begin{lem}\label{lem:proj}
    Let $\Delta = \Delta(\P;\mu)$ be a Minkowski complex for a collection of
    polytopes $\P$ in $\R^d$. Suppose that there is a line $\ell$ through
    $\mu$ that does not intersect any~$P_\sigma$ for $\sigma \in \Delta$, then
    $\ctd(\Delta) < d$.
\end{lem}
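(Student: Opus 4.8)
The plan is to project out the direction of $\ell$ and check that the resulting lower-dimensional family realizes the same complex. Let $v$ be a direction vector of $\ell$, choose a linear hyperplane $H \subset \R^d$ complementary to $v$ (so $H \oplus \R v = \R^d$ and $H \cong \R^{d-1}$), and let $\pi \colon \R^d \to H$ be the projection onto $H$ with kernel $\R v$. Since $\pi$ is linear we have $\pi(0) = 0$, and because $0 \in P_i$ this gives $0 \in \pi(P_i)$; hence $\P' := (\pi(P_1),\dots,\pi(P_n))$ is again a family of polytopes in $\R^{d-1}$ containing the origin. Set $\mu' := \pi(\mu)$. The goal is to show $\Delta(\P';\mu') = \Delta$, which immediately gives $\ctd(\Delta) \le d-1 < d$ since the Minkowski sum $P'_{[n]}$ lives in $\R^{d-1}$.

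First I would record that projection commutes with Minkowski sum, so that writing $P'_\sigma := \sum_{i \in \sigma}\pi(P_i)$ one has $\pi(P_\sigma) = P'_\sigma$. The crucial observation is that the fiber of $\pi$ over $\mu'$ is exactly $\ell$: indeed $\pi^{-1}(\mu') = \mu + \ker\pi = \mu + \R v = \ell$, using that $\ell$ passes through $\mu$ in direction $v$. Consequently, for every $\sigma \subseteq [n]$,
\[
    \mu' \in P'_\sigma = \pi(P_\sigma) \iff \pi^{-1}(\mu') \cap P_\sigma \neq \emptyset \iff \ell \cap P_\sigma \neq \emptyset.
\]
Both inclusions now follow. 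For $\Delta(\P';\mu') \subseteq \Delta$: if $\sigma \notin \Delta$ then $\mu \in P_\sigma$, so $\mu' = \pi(\mu) \in \pi(P_\sigma) = P'_\sigma$, whence $\sigma \notin \Delta(\P';\mu')$. For the reverse inclusion $\Delta \subseteq \Delta(\P';\mu')$: if $\sigma \in \Delta$, the hypothesis says $\ell \cap P_\sigma = \emptyset$, so by the displayed equivalence $\mu' \notin P'_\sigma$, i.e. $\sigma \in \Delta(\P';\mu')$. Together these yield $\Delta(\P';\mu') = \Delta$.

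Rather than a genuine obstacle, the decisive point is the identification of the fiber $\pi^{-1}(\mu')$ with $\ell$: this is precisely what couples the non-intersection hypothesis to the behavior of the projected basepoint, and it explains why the hypothesis is phrased for the line through $\mu$. The easy inclusion uses only $0 \in P_i$ and linearity of $\pi$; the substantive inclusion is $\Delta \subseteq \Delta(\P';\mu')$, where the non-intersection hypothesis rules out the danger that the projection collapses some $P_\sigma$ with $\sigma \in \Delta$ onto $\mu'$. Finally, as $\P'$ realizes $\Delta$ in $\R^{d-1}$, we conclude $\ctd(\Delta) \le d-1 < d$.
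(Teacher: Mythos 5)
Your proof is correct and follows essentially the same route as the paper: project along the direction of $\ell$ onto a complementary hyperplane, identify the fiber $\pi^{-1}(\pi(\mu))$ with $\ell$, and use the non-intersection hypothesis for one inclusion and $\mu \in P_\sigma$ for the other. The only cosmetic difference is that the paper takes $\pi$ to be the orthogonal projection onto $\ell^\perp$, while you allow any linear projection with kernel $\R v$; your write-up is slightly more explicit about the fiber identification and about $0 \in \pi(P_i)$, but the argument is the same.
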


\begin{proof}
    Denote by $\pi\colon \R^d \to \ell^\perp \cong \R^{d-1}$ the orthogonal
    projection along the line~$\ell$. Let $\pi(\P) = (\pi(P_1), \dots,
    \pi(P_n))$. For $\sigma \in \Delta$ the set $P_\sigma$ avoids~$\ell$ and
    thus $\pi(\mu) \notin \pi(P_\sigma)$. This shows that $\sigma \in
    \Delta(\pi(\P); \pi(\mu))$. Conversely $\sigma \in \Delta(\pi(\P);
    \pi(\mu))$ implies that the line $\ell = \pi^{-1}(\mu)$ does not
    intersect~$P_\sigma$ and thus $\sigma \in \Delta(\P; \mu)$. The claim now
    follows from  $\dim \pi(P_1)+\dots+\pi(P_n) < d$.
\end{proof}

\begin{proof}[Proof of Theorem~\ref{thm:inc}]
    Suppose that $\ctd(\Delta * \Gamma)=d$ and fix a realization $\Delta(\P;
    \mu)$ with $\P$ a collection of polytopes in~$\R^d$.  Denote the two edges
    of $\Gamma$ that induce one of the graphs in Figure~\ref{fig:forbidden} by
    $e$ and $f$ and let $e'$ and $f'$ be two disjoint (diagonal) nonedges.
    Then $\mu \in P_{e'} \cap P_{f'}$. Since $e \cup f = e' \cup f'$ we have
    $2\mu \in P_{e'} + P_{f'} = P_e + P_f$.  Thus, there is a vector $v \in
    \R^d\setminus\{0\}$ with $\mu - v \in P_e$ and $\mu+v \in P_f$. The
    line~$\ell$ 
    connecting $\mu-v$ and $\mu+v$ goes through~$\mu$. The convex sets $P_e$
    and $P_f$ intersect~$\ell$ on different sides of~$\mu$.
	
    The line~$\ell$ must also intersect $P_\sigma$ for some face $\sigma \in
    \Delta$ by Lemma~\ref{lem:proj}.  Since $\sigma \cup e, \sigma \cup f \in
    \Delta * \Gamma$, the sets $P_\sigma$ and $P_e$ as well as $P_\sigma$ and
    $P_f$ intersect~$\ell$ on the same side of~$\mu$ by
    Lemma~\ref{lem:lines-joins}. This is a contradiction.
\end{proof}

It would be very interesting if complexes of convex threshold dimension $d$
can be characterized in terms of the number of distinct copies of the
forbidden subgraphs.

\newcommand\X{\mathcal{X}}%
As a last thought, let us emphasize that convexity played a crucial role in
our considerations. For that, observe that the definition of Minkowski complex
$\Delta(\X;\mu)$ makes sense for collections $\X = (X_1,\dots,X_n)$ of
\emph{arbitrary} sets in $\R^d$ that contain the origin.

\begin{prop}\label{prop:any}
    Let $\Delta$ be a simplicial complex. Then there is a collection $\X
    =(X_1,\dots,X_n)$ of discrete sets in $\R$ such that $\Delta =
    \Delta(\X;\mu)$ for some $\mu \in \R$. In particular, there
    is a collection~$\X$ of contractible sets in $\R^2$ realizing $\Delta$ as
    a Minkowski complex.
\end{prop}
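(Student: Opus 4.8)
The plan is to treat the two claims in turn, deducing the second from the first. For the statement over $\R$, I would encode the combinatorics of $\Delta$ through its minimal non-faces $\tau_1,\dots,\tau_m$, using that $\sigma \notin \Delta$ if and only if $\sigma \supseteq \tau_k$ for some $k$. The idea is to prescribe, for each minimal non-face $\tau_k$, one way of writing a common nonzero target $\mu$ as a sum of contributions coming from exactly the vertices of $\tau_k$. Concretely, I would introduce a real number $x_{k,i}$ for every incidence $i \in \tau_k$ subject to the relations $\sum_{i\in\tau_k} x_{k,i} = \mu$, and set $X_i := \{0\}\cup\{x_{k,i} : i\in\tau_k\}$. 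Each $X_i$ is then a finite, hence discrete, subset of $\R$ containing the origin. By construction, if $\tau_k\subseteq\sigma$ one realizes $\mu\in X_\sigma$ by letting the vertices of $\tau_k$ contribute $x_{k,i}$ and all remaining vertices contribute $0$; so $\sigma\notin\Delta$ implies $\mu\in X_\sigma$.

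The content of the argument, and the step I expect to be the main obstacle, is the converse: ensuring that no \emph{accidental} choice of summands produces $\mu$. A selection of summands corresponds to a subset $S\subseteq\sigma$ together with a choice of $k_i$ for each $i\in S$, and yields the value $\sum_{i\in S} x_{k_i,i}$. I would argue that the equation $\sum_{i\in S} x_{k_i,i} = \mu$ is forced by the prescribed relations only when $S=\tau_k$ for a single $k$ with all $k_i=k$. The key observation making this tractable is that the relations $\sum_{i\in\tau_k} x_{k,i}=\mu$ have pairwise disjoint supports in the variables $x_{k,i}$, and are therefore linearly independent; matching coefficients then shows that a linear combination of them reproduces a selection equation exactly for a single prescribed relation. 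Every non-prescribed selection equation thus cuts out a proper affine subspace of the solution set of the prescribed relations, and since there are only finitely many selections, a generic solution avoids all of them. This is legitimate as soon as the solution set has positive dimension, i.e.\ as soon as some $\tau_k$ has at least two elements; the remaining case, where all minimal non-faces are singletons, is handled directly by taking $X_j=\{0,\mu\}$ for each non-vertex $j$ and $X_i=\{0\}$ otherwise. For such a generic choice, $\mu\in X_\sigma$ holds precisely when $\sigma$ contains some $\tau_k$, giving $\Delta(\X;\mu)=\Delta$.

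For the second claim I would lift this one-dimensional realization into $\R^2$ by fattening each discrete set into an arc. Placing $X_i$ on the horizontal axis, replace it by the graph $\tilde X_i := \{(t,f_i(t)) : \min X_i \le t \le \max X_i\}$ of a continuous function $f_i\ge 0$ vanishing exactly on $X_i$, for instance $f_i(t)=\prod_{a\in X_i}(t-a)^2$. Each $\tilde X_i$ is an arc, hence contractible, and contains the origin since $0\in X_i$. The point of choosing $f_i\ge 0$ is that every $\tilde X_i$ lies in the closed upper half-plane, so any point of the Minkowski sum $\tilde X_\sigma$ has nonnegative second coordinate, equal to $0$ exactly when every summand is taken on the axis. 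Hence $\tilde X_\sigma$ meets the axis precisely in the one-dimensional sum $X_\sigma$, so that $(\mu,0)\in\tilde X_\sigma$ if and only if $\mu\in X_\sigma$. Therefore $\Delta(\tilde \X;(\mu,0))=\Delta(\X;\mu)=\Delta$, realizing $\Delta$ by a collection of contractible sets in $\R^2$ and confirming that convexity, not topology, is what forces the convex threshold dimension to grow.
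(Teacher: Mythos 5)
Your proof is correct, but for the one-dimensional claim it takes a genuinely different route from the paper's. The paper never looks at minimal non-faces: it starts from the convex realization $(P_1,\dots,P_n;\mu)$ in some $\R^D$ supplied by Proposition~\ref{prop:all_minkowski}, and for each non-face $\tau \notin \Delta$ picks witness points $y^\tau_i \in P_i$ summing to $\mu$, setting $Y_i := \{0\} \cup \{ y^\tau_i : \tau \notin \Delta\}$. The crux that you correctly identify --- ruling out \emph{accidental} representations of $\mu$ --- is there resolved for free by convexity: $Y_\sigma \subseteq P_\sigma$ for every $\sigma$, and $P_\sigma$ avoids $\mu$ whenever $\sigma \in \Delta$. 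This yields discrete sets in $\R^D$ rather than in $\R$, and the paper then descends to the line by iterating the projection argument of Lemma~\ref{lem:proj}, which now only requires a line through $\mu$ missing the finitely many points of $\bigcup_{\sigma \in \Delta} Y_\sigma$. Your construction instead stays in $\R$ throughout and replaces convexity by linear algebra, and your handling of the crux is sound: since the prescribed relations have disjoint variable supports, a selection equation vanishes identically on the (nonempty) solution space only if it is an affine combination $\sum_k c_k(\sum_{i \in \tau_k} x_{k,i} - \mu)$; the $0/1$ coefficients and the fact that a selection uses at most one variable per vertex force each nonzero $c_k$ to be $1$, and the constant term forces $\sum_k c_k = 1$, so exactly one relation survives --- this is exactly the matching-coefficients step you describe, and your separate treatment of the all-singletons case covers the degenerate situation where genericity has no room to operate. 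What the paper's route buys is brevity (both verifications are one line) at the cost of invoking Proposition~\ref{prop:all_minkowski} and Lemma~\ref{lem:proj}; what yours buys is a self-contained construction that works directly in dimension one, with the sets built from the combinatorics of $\Delta$ itself rather than from a high-dimensional realization. For the planar claim you and the paper use the same mechanism: sets contained in the closed upper half-plane that meet the axis exactly in $X_i$, so that a Minkowski sum returns to the axis only when every summand lies on the axis; the paper realizes this by internally disjoint arcs joining each point of $X_i$ to the origin, you by the graph of a nonnegative function vanishing exactly on $X_i$, and both choices are contractible.
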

\begin{proof}
    Let $(\P;\mu)$ be a realization of $\Delta$ by convex sets in some $\R^d$.
    For any $\tau \not\in \Delta$, choose $y^\tau_i \in P_i$ such that $\mu =
    y^\tau_1 + \cdots + y^\tau_n$. Then $Y_i := \{0\} \cup \{ y^\tau_i : \tau
    \not \in \Delta \}$ yields a realization of $\Delta$ by discrete sets in
    $\R^d$.  The argument in the proof of Lemma~\ref{lem:proj} applies unless
    $d=1$ and proves the first claim. The second claim simply follows from the
    fact that we may connect the points $x_i \in X_i \setminus \{0\}$ by
    internally disjoint arcs properly contained in the upper half-plane to~$0$.
\end{proof}

\bibliographystyle{siam}

\end{document}